\documentclass{article}

\usepackage{amsmath,amsthm}
\usepackage{amsfonts}
\usepackage{amssymb}


\theoremstyle{plain}
\newtheorem{theorem}{Theorem}

\newtheorem{lemma}{Lemma}
\theoremstyle{remark}

\newtheorem{remark}{Remark}
\theoremstyle{definition}
\newtheorem{definition}{Definition}
\newtheorem{example}{Example}


\begin{document}

\title{Existence of positive solutions to a discrete fractional boundary
value problem and corresponding Lyapunov-type inequalities\thanks{This 
is a preprint of a paper whose final and definite form is with 
journal \emph{Opuscula Mathematica}, vol.~38, no.~1 (2018),
ISSN 1232-9274, e-ISSN 2300-6919, 
available at {\tt http://dx.doi.org/10.7494/OpMath}.\newline
Submitted 15-Nov-2016; Revised 31-May-2017; Accepted 18-June-2017.}}

\author{Amar Chidouh$^1$\\
\texttt{m2ma.chidouh@gmail.com} 
\and Delfim F. M. Torres$^2$\thanks{Corresponding author.} \\
\texttt{delfim@ua.pt}}

\date{$^1$Laboratory of Dynamic Systems,\\
Houari Boumedienne University,\\
Algiers, Algeria\\
[0.3cm] $^2$Center for Research and Development in Mathematics and
Applications (CIDMA), Department of Mathematics,\\
University of Aveiro, 3810-193 Aveiro, Portugal}

\maketitle


\begin{abstract}
We prove existence of positive solutions to a boundary value problem
depending on discrete fractional operators. Then, corresponding discrete
fractional Lyapunov-type inequalities are obtained.

\bigskip

\noindent \textbf{Keywords:} fractional difference equations, Lyapunov-type
inequalities, fractional boundary value problems, positive solutions.

\medskip

\noindent \textbf{MSC 2010:} 26A33, 26D15, 39A12.
\end{abstract}


\section{Introduction}

Recently, a large debate appeared regarding Lyapunov-type inequalities --
see, e.g., \cite{MyID:345,fdf,MR3371249,MR3352688} and references therein.
In 1907, Lyapunov proved in \cite{MR1508297} that if $q:[a,b]\rightarrow
\mathbb{R}$ is a continuous function, then a necessary condition for the
boundary value problem
\begin{equation}  
\label{11}
\left\{
\begin{array}{c}
y^{\prime\prime}+qy=0,\quad a<t<b, \\
y(a)=y(b)=0
\end{array}
\right.
\end{equation}
to have a nontrivial solution is given by
\begin{equation*}
\int\limits_{a}^{b}\left\vert q(s)\right\vert ds>\frac{4}{b-a}.
\end{equation*}
Ferreira has succeed to generalize the above classical result to the case
when the second-order derivative in \eqref{11} is substituted by a
fractional operator of order $\alpha$, in Caputo or Riemann--Liouville sense
\cite{MR3124347,ferreira2014lyapunov}. More recently, the authors obtained
in \cite{MyID:345} a generalized Lyapunov-type inequality for the following
fractional boundary value problem:
\begin{equation}  
\label{p}
\left\{
\begin{array}{c}
_{a}D^{\alpha}y+q(t)f(y)=0,\quad a<t<b, \\
y(a)=y(b)=0,
\end{array}
\right.
\end{equation}
where $_{a}D^{\alpha}$ is the Riemann--Liouville derivative, $1<\alpha\leq2$, 
and $q:[a,b]\rightarrow\mathbb{R}_{+}$ is a Lebesgue integrable function.

\begin{theorem}[See \protect\cite{MyID:345}]
\label{th0 copy(1)} 
Let $q:[a,b]\rightarrow\mathbb{R}_{+}$ be a real
Lebesgue integrable function. Assume that $f\in C\left( \mathbb{R}_{+},
\mathbb{R}_{+}\right) $ is a concave and nondecreasing function. If the
fractional boundary value problem \eqref{p} has a nontrivial solution, then
\begin{equation}
\label{56}
\int\limits_{a}^{b}q(t)dt 
> \frac{4^{\alpha-1}
\Gamma(\alpha)\eta}{(b-a)^{\alpha -1}f(\eta)}, 
\end{equation}
where $\eta=\max_{t\in\lbrack a,b]}y(t)$.
\end{theorem}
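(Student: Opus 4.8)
The plan is to convert \eqref{p} into an integral equation through its Green's function and then combine a sharp pointwise bound on that kernel with the monotonicity of $f$. First I would recall, following Ferreira, that any nontrivial solution $y$ of \eqref{p} satisfies
\[
y(t) = \int_a^b G(t,s)\, q(s)\, f(y(s))\, ds, \qquad t\in[a,b],
\]
with
\[
G(t,s) = \frac{1}{\Gamma(\alpha)}
\begin{cases}
\dfrac{(t-a)^{\alpha-1}(b-s)^{\alpha-1}}{(b-a)^{\alpha-1}} - (t-s)^{\alpha-1}, & a\le s\le t\le b,\\[2mm]
\dfrac{(t-a)^{\alpha-1}(b-s)^{\alpha-1}}{(b-a)^{\alpha-1}}, & a\le t\le s\le b.
\end{cases}
\]
This is obtained by writing the general solution of ${}_aD^{\alpha}y=-q\,f(y)$ as ${}_aI^{\alpha}(q\,f(y))$ plus $c_1(t-a)^{\alpha-1}+c_2(t-a)^{\alpha-2}$, discarding the singular term by the condition $y(a)=0$, and fixing $c_1$ from $y(b)=0$.

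The crucial analytic input is the estimate
\[
0 \le G(t,s) \le G\!\left(\tfrac{a+b}{2},\tfrac{a+b}{2}\right) = \frac{(b-a)^{\alpha-1}}{4^{\alpha-1}\,\Gamma(\alpha)}, \qquad (t,s)\in[a,b]^2,
\]
the upper value being attained only at $t=s=\tfrac{a+b}{2}$. To get this I would, for fixed $s$, note that on the branch $t\le s$ the kernel is increasing in $t$, so its maximum there is $G(s,s)=\frac1{\Gamma(\alpha)}\big(\tfrac{(s-a)(b-s)}{b-a}\big)^{\alpha-1}$; on the branch $t\ge s$ the map $t\mapsto G(t,s)$ is non-increasing on $[s,b]$ (its $t$-derivative is $\le 0$ there, using $1<\alpha\le2$; for $\alpha<2$ it even blows up to $-\infty$ as $t\downarrow s$ and has no interior zero), so again the maximum over $t$ is $G(s,s)$; finally $s\mapsto(s-a)(b-s)$ is maximized at the midpoint, and nonnegativity follows from the same monotonicity since $G(b,s)=0$. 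This monotonicity analysis — really the verification that the ``$t\ge s$'' branch never overshoots $G(s,s)$ — is the one genuinely delicate step; everything else is bookkeeping.

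With these two facts the inequality drops out. A nontrivial positive solution has $\eta:=\max_{[a,b]}y>0$, attained at some $\tau\in(a,b)$ because $y(a)=y(b)=0$; and $f(\eta)>0$, for otherwise monotonicity of $f$ would give $f\equiv0$ on $[0,\eta]$ and hence $y\equiv0$ by the representation. Since $0\le y(s)\le\eta$ and $f$ is nondecreasing, $f(y(s))\le f(\eta)$, so using the representation, the kernel bound, and $q\ge0$,
\[
\eta = y(\tau) = \int_a^b G(\tau,s)\, q(s)\, f(y(s))\, ds \le \frac{(b-a)^{\alpha-1}}{4^{\alpha-1}\Gamma(\alpha)}\, f(\eta)\int_a^b q(s)\, ds,
\]
which rearranges to the claimed bound with ``$\ge$'' in place of ``$>$''.

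It then remains to make the inequality strict. If equality held above, then $\int_a^b\big[\tfrac{(b-a)^{\alpha-1}}{4^{\alpha-1}\Gamma(\alpha)}-G(\tau,s)\big]q(s)f(y(s))\,ds=0$ with a nonnegative integrand, forcing $G(\tau,s)$ to equal its global maximum for a.e.\ $s$ with $q(s)f(y(s))>0$; but that maximum is reached only at the single point $s=\tfrac{a+b}{2}$, and $q\in L^1$ has no atoms, so $q\,f(y)=0$ a.e.\ on $[a,b]$, whence $y\equiv0$ by the representation, contradicting nontriviality. (Concavity of $f$ is not actually used in this argument — it enters only the companion existence results; in fact concavity together with $f\ge0$ already forces $f$ to be nondecreasing.)
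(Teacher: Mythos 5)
Your argument is correct, but note that this paper never proves Theorem~\ref{th0 copy(1)}: it is recalled, with a citation only, from \cite{MyID:345}, so the comparison must be with that reference and with the paper's discrete analogue, Theorem~\ref{th0}. Your route is essentially the standard one used there: represent a solution of \eqref{p} through Ferreira's Green function, bound the kernel by its global maximum $\frac{(b-a)^{\alpha-1}}{4^{\alpha-1}\Gamma(\alpha)}$ (attained only at $t=s=\frac{a+b}{2}$), and evaluate at the maximizer of $y$ using that $f$ is nondecreasing, exactly as the proof of Theorem~\ref{th0} does with the discrete kernel of Lemma~\ref{lmm}. Two points of substance. First, the one step you flag as delicate --- that $t\mapsto G(t,s)$ is nonincreasing on $[s,b]$ --- should be written out rather than argued via ``blows up to $-\infty$ and has no interior zero'': it follows directly from $\frac{(t-a)(b-s)}{b-a}-(t-s)=\frac{(s-a)(b-t)}{b-a}\ge 0$, $\frac{b-s}{b-a}\le 1$ and $\alpha-2\le 0$, which make the $t$-derivative nonpositive; this is precisely the lemma of Ferreira invoked in \cite{MyID:345}. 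Second, your handling of strictness (the equality case forces $qf(y)=0$ a.e.\ because the kernel maximum is attained at a single point, hence $y\equiv 0$) is more careful than the published discrete proof, which passes from $\le$ to $<$ without comment, and your observation that concavity of $f$ is nowhere needed is sound: only monotonicity, $f\ge 0$, $q\ge 0$ and nontriviality enter, so the concavity hypothesis in Theorem~\ref{th0 copy(1)} is superfluous for inequality \eqref{56} itself (it matters for the companion existence results). This is in fact a slightly stronger remark than the paper's own claim that concavity can be removed ``in the discrete setting''. Minor quibbles: positivity of $y$ should be noted as coming from the representation with nonnegative kernel (or from $f$ being defined only on $\mathbb{R}_{+}$), and the phrase ``$q\in L^1$ has no atoms'' should read that the singleton $\{\frac{a+b}{2}\}$ has Lebesgue measure zero.
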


Here we are concerned with the discrete fractional calculus 
\cite{MR2728463,MR3445243}. It turns out that Lyapunov fractional inequalities
can also be obtained by considering a discrete fractional difference in 
\eqref{11} instead the Caputo or Riemann--Liouville derivatives \cite{fdf}.
Motivated by the results obtained in \cite{fff,MyID:345,fdf,ff}, we prove
here some generalizations of the Lypunov inequality of \cite{fdf}. The new
inequalities are, in some sense, similar to that of \eqref{56} (compare with
\eqref{eq:new:ineq} and \eqref{eq:LI:d2}) but, instead of \eqref{p}, they
involve the following discrete fractional boundary value problem:
\begin{equation}  
\label{cd}
\left\{
\begin{array}{c}
\triangle^{\alpha}y+q(t+\alpha-1)f(y(t+\alpha-1))=0, \quad 1<\alpha\leq2, \\
y(\alpha-2)=y(\alpha+b+1)=0, \quad b\geq2, \quad b\in \mathbb{N},
\end{array}
\right.
\end{equation}
where operator $\triangle^{\alpha}$ is defined in Section~\ref{sec:2}.
Interestingly, we show that the hypothesis found in Theorem~\ref{th0 copy(1)}, 
assuming the nonlinear term $f$ to be concave, can be removed in the
discrete setting (see Theorems~\ref{th0} and \ref{co}).

The paper is organized as follows. In Section~\ref{sec:2}, we recall some
notations, definitions and preliminary facts, which are used throughout the
work. Our original results are then given in Section~\ref{sec:3}: using the
Guo--Krasnoselskii fixed point theorem, we establish in Section~\ref{sec:3.1}
an existence result for the discrete fractional boundary value problem 
\eqref{cd} (see Theorem~\ref{th}); then, in Section~\ref{sec:3.2}, assuming
that function $f:\mathbb{R}_{+}\rightarrow \mathbb{R}_{+}$ is only
continuous and nondecreasing, we generalize the Lyapunov inequality given in
\cite[Theorem~3.1]{fdf} (see Theorems~\ref{th0} and \ref{co}). Examples
illustrating the new results are given.


\section{Preliminaries}
\label{sec:2}

In this section, we recall some notations, definitions and preliminary
facts, which are used throughout the text. We begin by recalling the
well-known definition of power function:
\begin{equation*}
x^{[y]}=\frac{\Gamma(x+1)}{\Gamma (x-y+1)}
\end{equation*}
for any $x$ and $y$ for which the right-hand side is defined. We borrow from
\cite{MyID:179} the following notation:
\begin{equation*}
\mathbb{N}_{a}:=\{a,a+1,a+2,...\}, \ a\in \mathbb{R}.
\end{equation*}

\begin{definition}
For a function $f:\mathbb{N}_{a}\rightarrow \mathbb{R}$, the discrete
fractional sum of order $\alpha\geq0$ is defined by
\begin{equation*}
(_{a}\triangle_{t}^{-\alpha}f)(t)=\frac{1}{\Gamma(\alpha)}
\sum\limits_{s=a}^{t-\alpha}(t-s-1)^{[\alpha-1]}f(s), 
\quad t\in\mathbb{N}_{a+\alpha}.
\end{equation*}
\end{definition}

\begin{definition}
For a function $f: \mathbb{N}_{a}\rightarrow \mathbb{R}$, the discrete
fractional difference of order $\alpha>0$ $(n-1\leq\alpha\leq n$, where 
$n\in \mathbb{N})$ is defined by
\begin{equation*}
(\triangle^{\alpha}f)(t)=(\triangle^{n} \,
_{a}\triangle_{t}^{-(n-\alpha)}f)(t), \quad t\in \mathbb{N}_{a+n-\alpha},
\end{equation*}
where $\triangle^{n}$ is the standard forward difference of order $n$.
\end{definition}

The reader interested on more details about the discrete fractional calculus
is referred to \cite{fff,fdf,ff,MR3445243}.

\begin{definition}
Let $X$ be a real Banach space. A nonempty closed convex set $P\subset X$ is
called a cone if it satisfies the following two conditions:

\begin{description}
\item[$(i)$] $x\in P$, $\lambda\geq0$, implies $\lambda x\in P$;

\item[$(ii)$] $x\in P$, $-x\in P$, implies $x=0$.
\end{description}
\end{definition}

\begin{lemma}[{Guo--Krasnoselskii fixed point theorem \protect\cite{[3]}}]
\label{kras} 
Let $X$ be a Banach space and let $K\subset X$ be a cone.
Assume $\Omega_{1}$ and $\Omega_{2}$ are bounded open subsets of $X$ with 
$0\in\Omega_{1}\subset\overline{\Omega}_{1}\subset$ $\Omega_{2}$, and let 
$T:K\cap (\overline{\Omega}_{2}\backslash\Omega_{1})\rightarrow K$ be a
completely continuous operator such that
\begin{description}
\item[$(i)$] $\left\Vert Tu\right\Vert \geq\left\Vert u\right\Vert $ for any
$u\in K\cap\partial\Omega_{1}$ and $\left\Vert Tu\right\Vert \leq\left\Vert
u\right\Vert $ for any $u\in K\cap\partial\Omega_{2}$; or

\item[$(ii)$] $\left\Vert Tu\right\Vert \leq\left\Vert u\right\Vert $ for
any $u\in K\cap\partial\Omega_{1}$ and $\left\Vert Tu\right\Vert
\geq\left\Vert u\right\Vert $ for any $u\in K\cap\partial\Omega_{2}$.
\end{description}
Then $T$ has a fixed point in $K\cap(\overline{\Omega}_{2}
\backslash \Omega_{1})$.
\end{lemma}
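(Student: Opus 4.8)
\medskip

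\noindent\textbf{Proof proposal.}
The statement is the classical Guo--Krasnoselskii cone compression/expansion theorem, and the route I would take is through fixed point index theory on the cone $K$ (I tacitly assume $K\neq\{0\}$; otherwise $K\cap(\overline{\Omega}_2\setminus\Omega_1)=\emptyset$ and there is nothing to prove). First I would dispose of a degenerate case: if $T$ has a fixed point on $K\cap\partial\Omega_1$ or on $K\cap\partial\Omega_2$ we are done, so assume from now on that $Tu\neq u$ for all $u\in K\cap(\partial\Omega_1\cup\partial\Omega_2)$. Since $T$ is only defined on the shell $K\cap(\overline{\Omega}_2\setminus\Omega_1)$, I would extend it, via Dugundji's extension theorem, to a completely continuous map $\widetilde T\colon K\cap\overline{\Omega}_2\to X$; because $K$ is closed and convex and a Dugundji extension takes values in the convex hull of the original image, $\widetilde T$ still maps into $K$, and $\widetilde T=T$ on the shell. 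I then use the fixed point index $i(\cdot,K\cap\Omega,K)$, defined for completely continuous self-maps of $K\cap\overline{\Omega}$ that are fixed-point-free on $K\cap\partial\Omega$, together with its standard properties: normalization, additivity/excision, homotopy invariance, and the solution property (a nonzero index forces a fixed point).

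The heart of the matter is two index computations, for a completely continuous $S\colon K\cap\overline{\Omega}\to K$ that is fixed-point-free on $K\cap\partial\Omega$, where $0\in\Omega$. (a) If $\|Su\|\le\|u\|$ on $K\cap\partial\Omega$, then $i(S,K\cap\Omega,K)=1$. For this one checks that $Su\neq\mu u$ on $K\cap\partial\Omega$ for every $\mu\ge1$ (if $\mu>1$ this would force $\|Su\|=\mu\|u\|>\|u\|$; $\mu=1$ is excluded), so the homotopy $H(t,u)=tSu$, $t\in[0,1]$, is admissible on $K\cap\partial\Omega$ (at $t=0$ its only possible fixed point would be $0$, which is not on $\partial\Omega$ since $0\in\Omega$ is interior), and homotopy invariance together with normalization give $i(S,K\cap\Omega,K)=i(0,K\cap\Omega,K)=1$. (b) If $\|Su\|\ge\|u\|$ on $K\cap\partial\Omega$, then $i(S,K\cap\Omega,K)=0$. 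Here I would first record the two easy consequences $\inf_{u\in K\cap\partial\Omega}\|Su\|\ge\inf_{u\in K\cap\partial\Omega}\|u\|>0$ and $Su\neq\mu u$ on $K\cap\partial\Omega$ for every $\mu\in(0,1]$ (for $\mu<1$, $Su=\mu u$ would give $\|Su\|=\mu\|u\|<\|u\|$; $\mu=1$ is excluded), and then invoke the standard index-vanishing lemma whose hypotheses these are. That lemma is proved by a perturbation homotopy $H(t,u)=Su+t\lambda e$, $t\in[0,1]$, with $e\in K\setminus\{0\}$ fixed and $\lambda$ chosen so large that $S+\lambda e$ has no fixed point in $K\cap\overline{\Omega}$ at all (norm-escape: $\|Su+\lambda e\|\ge\lambda\|e\|-\sup\{\|Sv\|:v\in K\cap\overline{\Omega}\}$ eventually exceeds $\sup\{\|v\|:v\in\overline{\Omega}\}$), the admissibility of the homotopy on $K\cap\partial\Omega$ being where the two recorded consequences (and a compactness argument) enter. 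I expect step (b), and in particular this admissibility verification, to be the main obstacle; for the concrete finite-dimensional positive cone used later in the paper it is routine.

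Given (a) and (b), the theorem follows by combining them with additivity. Under hypothesis $(i)$ we have $\|Tu\|\ge\|u\|$ on $K\cap\partial\Omega_1$ and $\|Tu\|\le\|u\|$ on $K\cap\partial\Omega_2$; applying (b) on $\Omega_1$ and (a) on $\Omega_2$ to $S=\widetilde T$ (legitimate because $\widetilde T=T$ on both boundaries and $0\in\Omega_1\subset\Omega_2$) gives $i(\widetilde T,K\cap\Omega_1,K)=0$ and $i(\widetilde T,K\cap\Omega_2,K)=1$, whence $i(\widetilde T,K\cap(\Omega_2\setminus\overline{\Omega}_1),K)=i(\widetilde T,K\cap\Omega_2,K)-i(\widetilde T,K\cap\Omega_1,K)=1\neq0$; by the solution property $\widetilde T$, hence $T$, has a fixed point in $K\cap(\Omega_2\setminus\overline{\Omega}_1)\subset K\cap(\overline{\Omega}_2\setminus\Omega_1)$. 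Under hypothesis $(ii)$ the two norm inequalities are interchanged, so (a) applies on $\Omega_1$ and (b) on $\Omega_2$, the two index values become $1$ and $0$, the shell index is $0-1=-1\neq0$, and the same conclusion follows.
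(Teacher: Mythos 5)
The paper does not prove this lemma at all: it is stated as a known tool and attributed to Guo and Lakshmikantham \cite{[3]}, and is then simply applied in the proof of Theorem~\ref{th}. So there is no internal proof to compare yours against; the relevant benchmark is the classical proof in the cited monograph, and your route --- extend $T$ from the shell $K\cap(\overline{\Omega}_2\setminus\Omega_1)$ to $K\cap\overline{\Omega}_2$, compute the fixed point index (index $1$ on the boundary where $\Vert Tu\Vert\le\Vert u\Vert$ via the homotopy $tTu$, index $0$ on the boundary where $\Vert Tu\Vert\ge\Vert u\Vert$), then conclude by additivity/excision and the solution property --- is exactly that standard argument. Your disposal of boundary fixed points, the Dugundji extension remark (with the closed convex hull of a compact set being compact, so complete continuity is preserved), step (a), and the final bookkeeping under hypotheses $(i)$ and $(ii)$ are all correct.

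The one genuine soft spot is step (b), and you flag it yourself. Reducing it to the standard lemma ``$\inf_{u\in K\cap\partial\Omega}\Vert Su\Vert>0$ together with $Su\neq\mu u$ for all $u\in K\cap\partial\Omega$, $0<\mu\le1$, implies $i(S,K\cap\Omega,K)=0$'' is legitimate --- that lemma is itself in \cite{[3]}, and your verification of its hypotheses from $\Vert Su\Vert\ge\Vert u\Vert$ is correct --- but your sketched proof of that lemma does not work as described. Choosing $\lambda$ so large that $S+\lambda e$ is fixed-point free on $K\cap\overline{\Omega}$ only controls the endpoint $t=1$ of the homotopy $H(t,u)=Su+t\lambda e$; admissibility requires $Su+t\lambda e\neq u$ on $K\cap\partial\Omega$ for all intermediate $t$ as well, and in a general (possibly non-normal) cone the norm of $Su+t\lambda e$ admits no lower bound of the form $t\lambda\Vert e\Vert$ minus a constant, so ``the two recorded consequences plus compactness'' do not deliver it; the argument is genuinely different in the general case (it is routine only when the norm is monotone on the cone, e.g.\ for the finite-dimensional nonnegative orthant actually used later in the paper, as you note). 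In short: if you are allowed to quote the two index computations from \cite{[3]} --- the same standard the paper applies to the whole lemma --- your proposal is complete and matches the classical proof; if the index-zero computation must be proved from scratch, that step is still missing.
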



\section{Main results}
\label{sec:3}

Let us consider the nonlinear discrete fractional boundary value problem
\eqref{cd}. We deal with its sum representation involving a Green function.

\begin{lemma}
\label{lemmaRF} 
Function $y$ is a solution to the boundary value problem
\eqref{cd} if, and only if, $y$ satisfies
\begin{equation*}
y(t)=\sum \limits_{s=0}^{b+1}G(t,s)q(s+\alpha-1)f(y(s+\alpha-1)),
\end{equation*}
where
\begin{equation}  
\label{eq:Gf}
G(t,s)=\frac{1}{\Gamma(\alpha)}
\begin{cases}
\frac{t^{[\alpha-1]}(\alpha+b-s)^{[\alpha-1]}}{(\alpha+b+1)^{\left[ 
\alpha-1\right] }}-(t-s-1)^{[\alpha-1]},\ s<t-\alpha+1\leq b+1, \\
\frac{t^{[\alpha-1]}(\alpha+b-s)^{[\alpha-1]}}{(\alpha+b+1)^{\left[ 
\alpha-1 \right] }},\quad t-\alpha+1<s\leq b+1,
\end{cases}
\end{equation}
is the Green function associated to problem \eqref{cd}.
\end{lemma}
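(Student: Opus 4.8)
The plan is to convert the fractional summation equation \eqref{cd} into an equivalent summation equation by applying the fractional sum operator $_{\alpha-2}\triangle^{-\alpha}$ to both sides, using the known inversion formula that $_{a}\triangle^{-\alpha}\triangle^{\alpha} y$ recovers $y$ up to a discrete polynomial of degree $< \alpha$; since here $1 < \alpha \le 2$, that polynomial has the form $c_1 t^{[\alpha-1]} + c_2 t^{[\alpha-2]}$. Thus I would first write
\begin{equation*}
y(t) = -\,{}_{\alpha-2}\triangle_{t}^{-\alpha}\bigl(q(t+\alpha-1)f(y(t+\alpha-1))\bigr) + c_1 t^{[\alpha-1]} + c_2 t^{[\alpha-2]},
\end{equation*}
where the sign comes from moving the nonlinear term to the right in \eqref{cd}, and then unwind the definition of the fractional sum to get an explicit double-indexed expression in which the inner sum runs up to $t-\alpha$.

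Next I would impose the two boundary conditions $y(\alpha-2)=0$ and $y(\alpha+b+1)=0$ to solve for $c_1$ and $c_2$. Evaluating at $t=\alpha-2$ kills the summation term (the upper limit of the sum falls below the lower limit, so it is empty) and also kills $t^{[\alpha-1]}=(\alpha-2)^{[\alpha-1]}=0$ because $\Gamma(\alpha-1)/\Gamma(0)=0$; together with $(\alpha-2)^{[\alpha-2]}=\Gamma(\alpha-1)\ne 0$ this forces $c_2=0$. Then evaluating at $t=\alpha+b+1$ and solving the resulting linear equation yields
\begin{equation*}
c_1 = \frac{1}{(\alpha+b+1)^{[\alpha-1]}}\cdot\frac{1}{\Gamma(\alpha)}\sum_{s=0}^{b+1}(\alpha+b-s)^{[\alpha-1]}q(s+\alpha-1)f(y(s+\alpha-1)).
\end{equation*}
Substituting $c_1$ and $c_2$ back and re-indexing, I would collect the two contributions — the term $c_1 t^{[\alpha-1]}$ multiplied against each summand, and the subtracted $(t-s-1)^{[\alpha-1]}$ coming from the fractional sum — and observe that the subtracted piece only appears when $s \le t-\alpha$, i.e. $s < t-\alpha+1$, which is exactly the case split in \eqref{eq:Gf}. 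This gives $y(t)=\sum_{s=0}^{b+1}G(t,s)q(s+\alpha-1)f(y(s+\alpha-1))$ with $G$ as claimed, establishing the "only if" direction.

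For the "if" direction I would simply verify by direct computation that the function defined by the summation formula satisfies both the boundary conditions and the difference equation: checking $y(\alpha-2)=0$ and $y(\alpha+b+1)=0$ amounts to checking $G(\alpha-2,s)=0$ and $G(\alpha+b+1,s)=0$ for all admissible $s$, which follows from $( \alpha-2)^{[\alpha-1]}=0$ and from the first branch of $G$ collapsing to zero at $t=\alpha+b+1$ after cancellation; then applying $\triangle^\alpha$ to $y$ and using the fact that $\triangle^\alpha$ annihilates $t^{[\alpha-1]}$ and reproduces the original nonlinear term from the fractional-sum part recovers the equation in \eqref{cd}. The main obstacle I anticipate is bookkeeping: keeping the shifted arguments $t+\alpha-1$, the domains $\mathbb{N}_{a+n-\alpha}$, and the empty-sum conventions consistent, and carefully justifying the exchange of the order of summation (or equivalently the telescoping of $\triangle^n$ against the fractional sum) so that the index ranges in \eqref{eq:Gf} come out precisely as stated; the algebra itself is routine once the power-rule identities for $t^{[\beta]}$ and the composition rule $\triangle^{-\alpha}\triangle^\alpha$ in the discrete setting are invoked correctly.
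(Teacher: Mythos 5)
Your proposal is correct and is essentially the argument the paper appeals to: the paper's proof is just a pointer to At\i c\i--Eloe \cite{fff}, and their derivation is exactly your route (invert $\triangle^{\alpha}$ via the fractional sum to get $y(t)=-\frac{1}{\Gamma(\alpha)}\sum_{s=0}^{t-\alpha}(t-s-1)^{[\alpha-1]}q(s+\alpha-1)f(y(s+\alpha-1))+c_1t^{[\alpha-1]}+c_2t^{[\alpha-2]}$, use $y(\alpha-2)=0$ to kill $c_2$, use $y(\alpha+b+1)=0$ to solve for $c_1$, and split the cases $s<t-\alpha+1$ versus $s\geq t-\alpha+1$ to read off $G$). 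The only slip is notational: since $\triangle^{\alpha}y$ lives on $\mathbb{N}_{0}$ (the equation holds for $t=0,\dots,b+1$), the inverting sum should be taken with base $0$ rather than $\alpha-2$, which is in fact what your explicit formulas with lower limit $s=0$ already use.
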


\begin{proof}
Similar to the one found in \cite{fff}.
\end{proof}

\begin{lemma}
\label{lmm} 
The Green function $G$ given by \eqref{eq:Gf} 
satisfies the following properties:
\begin{enumerate}
\item $G(t,s)>0$ for all $t\in\lbrack\alpha-1, 
\alpha+b]_{\mathbb{N}_{\alpha-1}}$ and $s\in\lbrack1,b+1]_{\mathbb{N}_{1}}$;

\item $\max_{t\in\lbrack\alpha-1, \alpha+b]_{\mathbb{N}_{\alpha-1}}}G(t,s)
=G(s+\alpha-1,s),\ s\in\lbrack1,b+1]_{\mathbb{N}_{1}}$;

\item $G(s+\alpha-1,s)$ has a unique maximum given by
\begin{equation*}
\max_{s\in\lbrack1,b+1]_{\mathbb{N}_{1}}}G(s+\alpha-1,s) =\left\{
\begin{array}{l}
\frac{1}{4}\frac{(b+2\alpha)(b+2)\Gamma^{2}(\frac{b}{2}+\alpha)\Gamma (b+3)}{
\Gamma(\alpha)\Gamma(b+\alpha+2)\Gamma^{2}(\frac{b}{2}+2)},\ \text{if } b
\text{ is even,} \\
\frac{1}{\Gamma(\alpha)}\frac{\Gamma(b+3)\Gamma^{2}(\frac{b+1}{2}+\alpha )}{
\Gamma(b+\alpha+2)\Gamma^{2}(\frac{b+3}{2})}, \ \text{if }b\text{ is odd};
\end{array}
\right.
\end{equation*}

\item there exists a positive constant $\lambda\in(0,1)$ such that
\begin{equation*}
\min_{t\in\left[ \frac{b+\alpha}{4},\frac{3(b +\alpha)}{4}
\right]_{_{\mathbb{N}_{\alpha-1}}}}G(t,s) \geq\lambda
\max_{t\in\lbrack\alpha-1, \alpha+b]_{\mathbb{N}_{\alpha-1}}}
G(t,s) =\lambda G(s+\alpha-1,s)
\end{equation*}
for $s\in\lbrack1,b+1]_{\mathbb{N}_{1}}$.
\end{enumerate}
\end{lemma}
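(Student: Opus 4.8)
The plan is to establish the four properties in the order listed, treating the Green function $G(t,s)$ given by \eqref{eq:Gf} as a sum of the ``global'' term $g_1(t,s)=\frac{t^{[\alpha-1]}(\alpha+b-s)^{[\alpha-1]}}{\Gamma(\alpha)(\alpha+b+1)^{[\alpha-1]}}$ and the ``local'' correction $-\frac{1}{\Gamma(\alpha)}(t-s-1)^{[\alpha-1]}$ that appears only on the branch $s<t-\alpha+1\le b+1$. The key analytic fact I would use repeatedly is the behaviour of the falling-power function $x^{[\alpha-1]}=\Gamma(x+1)/\Gamma(x-\alpha+2)$: for $1<\alpha\le 2$ it is positive and increasing on the relevant integer ranges, and discrete differencing in $t$ lowers the exponent, i.e. $\triangle_t\, t^{[\alpha-1]}=(\alpha-1)t^{[\alpha-2]}$. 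I would state these as a short preliminary computation before attacking the four items.

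For item~1, I would fix $s\in[1,b+1]_{\mathbb{N}_1}$ and show positivity branch-by-branch. On the branch $t-\alpha+1\le s$ the expression is a product of manifestly positive factors, so it is immediate. On the branch $s<t-\alpha+1$, write $G(t,s)\Gamma(\alpha)=t^{[\alpha-1]}\!\left(\frac{(\alpha+b-s)^{[\alpha-1]}}{(\alpha+b+1)^{[\alpha-1]}}\right)-(t-s-1)^{[\alpha-1]}$ and bound the subtracted term: since $(t-s-1)\le (t-\alpha)\le(b)$ one gets $(t-s-1)^{[\alpha-1]}\le t^{[\alpha-1]}\cdot\frac{(\alpha+b-s)^{[\alpha-1]}}{(\alpha+b+1)^{[\alpha-1]}}$ after comparing the two sides as ratios of Gamma functions; the cleanest route is to show that $h(t):=G(t,s)$ is decreasing in $t$ on this branch (differencing kills the growing first term faster than the second only in the right regime — this needs care) and that its value at the right endpoint $t=\alpha+b+1$ is zero by the boundary condition, giving $G(t,s)>0$ strictly for interior $t$. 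For item~2, I would show that on the lower branch $G(\cdot,s)$ is nondecreasing in $t$ and on the upper branch it is nonincreasing (or that the two branches meet continuously at $t=s+\alpha-1$ and the one-sided monotonicities pinch the maximum there); evaluating at $t=s+\alpha-1$ makes the subtracted term $(t-s-1)^{[\alpha-1]}=(\alpha-2)^{[\alpha-1]}=0$, so both pieces agree and the claimed maximiser is $G(s+\alpha-1,s)=\frac{1}{\Gamma(\alpha)}\cdot\frac{(s+\alpha-1)^{[\alpha-1]}(\alpha+b-s)^{[\alpha-1]}}{(\alpha+b+1)^{[\alpha-1]}}$.

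For item~3, having the explicit one-variable function $\varphi(s):=G(s+\alpha-1,s)=\frac{(s+\alpha-1)^{[\alpha-1]}(\alpha+b-s)^{[\alpha-1]}}{\Gamma(\alpha)(\alpha+b+1)^{[\alpha-1]}}$, I would study the sign of the forward difference $\varphi(s+1)-\varphi(s)$, or equivalently the ratio $\varphi(s+1)/\varphi(s)$, which by the Gamma-quotient structure reduces to comparing $(s+\alpha)(b+1-s)$-type linear factors against $(s+1)(\alpha+b-s-\alpha+2)$-type factors; this yields a single crossing point, hence a unique interior maximum, and the location splits into the parity cases $s=\lfloor (b+2)/2\rfloor$ (giving the two displayed closed forms after substituting $\Gamma$-values at $b/2+\alpha$, $b/2+2$, etc.). The main obstacle — and the place where I expect the bookkeeping to be heaviest — is exactly this item~3 parity computation: turning the ratio condition into the stated $\Gamma$-product formulas requires careful handling of $(\alpha+b+1)^{[\alpha-1]}=\Gamma(\alpha+b+2)/\Gamma(b+3)$ and of the falling powers at half-integer arguments, and one must verify there is no off-by-one error in which of $s=\lfloor (b+2)/2\rfloor$ or $\lceil\cdot\rceil$ is the true maximiser.

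Finally, for item~4, I would use items~1–3: on the compact discrete interval $\big[\tfrac{b+\alpha}{4},\tfrac{3(b+\alpha)}{4}\big]$ the function $G(\cdot,s)$ is strictly positive (item~1) and, by the branch monotonicities from item~2, its minimum over $t$ in this sub-interval is attained at one of the two endpoints; hence $\min_t G(t,s)\ge \lambda(s)\,G(s+\alpha-1,s)$ with $\lambda(s)=\min_t G(t,s)/G(s+\alpha-1,s)\in(0,1)$, and setting $\lambda:=\min_{s\in[1,b+1]}\lambda(s)$ — a minimum of finitely many positive numbers, each below $1$ since the endpoints of the sub-interval differ from the maximiser $t=s+\alpha-1$ for at least one $s$ — gives the uniform constant. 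I would close by remarking that $\lambda\in(0,1)$ strictly because $b\ge 2$ forces the sub-interval to be nondegenerate and not to consist solely of maximisers.
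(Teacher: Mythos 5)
Note first that the paper does not actually prove this lemma: its ``proof'' is the single line ``Similar to the one found in \cite{fdf}'', so the comparison here is with the standard arguments of At\i c\i--Eloe \cite{fff} and Ferreira \cite{fdf} to which the authors defer (items 1, 2 and 4 are essentially \cite{fff}, item 3 is the computation in \cite{fdf}). Your plan reconstructs exactly that standard route and is sound: positivity and the location of the $t$-maximum via one-sided monotonicity on the two branches (the lower branch handled by $\triangle_t G(t,s)\le 0$ together with $G(\alpha+b+1,s)=0$), then the one-variable analysis of $\varphi(s)=G(s+\alpha-1,s)$ by the ratio test, and finally the constant $\lambda$. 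Two remarks. First, the step you flag as needing care is precisely the content of \cite{fff}: one computes $\triangle_t G(t,s)=\frac{\alpha-1}{\Gamma(\alpha)}\bigl(t^{[\alpha-2]}\tfrac{(\alpha+b-s)^{[\alpha-1]}}{(\alpha+b+1)^{[\alpha-1]}}-(t-s-1)^{[\alpha-2]}\bigr)$ and verifies it is nonpositive for $t\ge s+\alpha-1$; this does go through and should be written out rather than hedged. Second, in item 3 your ratio test gives $\varphi(s+1)/\varphi(s)=\frac{(s+\alpha)(b+1-s)}{(s+1)(\alpha+b-s)}\ge 1 \iff (\alpha-1)(b-2s)\ge 0$, so for $b$ odd the maximizer $s=\frac{b+1}{2}$ is unique, but for $b$ even the maximum \emph{value} is attained at the two symmetric points $s=\frac{b}{2}$ and $s=\frac{b}{2}+1$ (indeed $\varphi(b+1-s)=\varphi(s)$); your claim of a ``unique interior maximum'' point is therefore slightly off in the even case, though the displayed closed forms are exactly what one gets at $s=\lfloor\frac{b+2}{2}\rfloor$, so the stated conclusion (uniqueness of the maximal value) is unaffected. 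Your treatment of item 4 --- positivity plus finiteness of the discrete index sets, with $\lambda$ taken as a minimum of finitely many positive ratios --- is legitimate and is in fact simpler than the estimate-based argument needed in the continuous analogue; the only hypotheses to check are that $\bigl[\frac{b+\alpha}{4},\frac{3(b+\alpha)}{4}\bigr]_{\mathbb{N}_{\alpha-1}}$ is nonempty and contained in $[\alpha-1,\alpha+b]_{\mathbb{N}_{\alpha-1}}$, which holds for $b\ge 2$ and $1<\alpha\le 2$.
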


\begin{proof}
Similar to the one found in \cite{fdf}.
\end{proof}


\subsection{Existence of positive solutions}
\label{sec:3.1}

Let us consider the Banach space
\begin{equation*}
X:=\left\{ y:[\alpha-2,\alpha+b+1]_{\mathbb{N}_{\alpha-2}} 
\rightarrow \mathbb{R},\ y(\alpha-2)=y(\alpha+b+1)=0\right\}
\end{equation*}
with the supremum norm. In agreement with Lemma~\ref{kras}, to prove
existence of a solution to the discrete fractional boundary value problem
\eqref{cd}, it suffices to prove that a suitable map $T$ has a fixed point
in $X$. We are interested to prove existence of nontrivial positive
solutions to \eqref{cd}, which are the ones to have a physical meaning 
\cite{MR3480533}. For that, we consider the following two hypotheses:
\begin{description}
\item[$(H_1)$] $f(y)\geq\overset{\ast}{\gamma}r_{1}$ for $y\in\lbrack0,r_{1}]$,

\item[$(H_2)$] $f(y)\leq\gamma r_{2}$ for $y\in\lbrack0,r_{2}]$,
\end{description}
where $f:\mathbb{R}_{+}\rightarrow \mathbb{R}_{+}$ is continuous. 
In what follows, we take
\begin{equation}  
\label{eq:gamma}
\gamma :=\left( \sum\limits_{s=0}^{b+1}
G(s+\alpha -1,s)q(s+\alpha -1)\right)^{-1}
\end{equation}
and
\begin{equation}  
\label{eq:ast:gamma}
\overset{\ast }{\gamma } :=\left( 
\sum\limits_{s=\frac{b+\alpha }{4}}^{\frac{3(b+\alpha )}{4}} 
\lambda G(s+\alpha -1,s)q(s+\alpha -1)\right)^{-1}.
\end{equation}

\begin{theorem}
\label{th} 
Let $q:$ $\mathbb{[\alpha -}1, \alpha +b]_{\mathbb{N}_{\alpha-1}}
\rightarrow \mathbb{R}_{+}$ be a nontrivial function. Assume that there
exist two positive constants $r_{2}>r_{1}>0$ such that the assumptions $(H_1)$
and $(H_2)$ are satisfied. Then the discrete fractional boundary value
problem \eqref{cd} has at least one nontrivial positive solution $y$
belonging to $X$ such that $r_{1}\leq \left\Vert y\right\Vert \leq r_{2}$.
\end{theorem}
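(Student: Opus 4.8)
The plan is to apply the Guo--Krasnoselskii fixed point theorem (Lemma~\ref{kras}) to the operator $T:X\to X$ naturally suggested by Lemma~\ref{lemmaRF}, namely
\begin{equation*}
(Ty)(t)=\sum_{s=0}^{b+1}G(t,s)\,q(s+\alpha-1)\,f(y(s+\alpha-1)),
\end{equation*}
for $t\in[\alpha-2,\alpha+b+1]_{\mathbb{N}_{\alpha-2}}$, where by Lemma~\ref{lemmaRF} the fixed points of $T$ are exactly the solutions of \eqref{cd}. First I would fix the cone: since the Green function is positive and concentrated in the middle by Lemma~\ref{lmm} (properties 1, 2 and 4), the right choice is
\begin{equation*}
K=\Bigl\{y\in X : y(t)\geq 0 \text{ and } \min_{t\in[\frac{b+\alpha}{4},\frac{3(b+\alpha)}{4}]_{\mathbb{N}_{\alpha-1}}}y(t)\geq\lambda\|y\|\Bigr\},
\end{equation*}
with $\lambda\in(0,1)$ the constant from Lemma~\ref{lmm}(4). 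One checks $K$ is a cone in the sense of the definition above.

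Next I would verify that $T(K)\subset K$ and that $T$ is completely continuous. Nonnegativity of $Ty$ is immediate from $G>0$ (Lemma~\ref{lmm}(1)), $q\geq 0$ and $f\geq 0$. For the cone inequality, use Lemma~\ref{lmm}(2) to bound $\|Ty\|\leq\sum_s G(s+\alpha-1,s)q(s+\alpha-1)f(y(s+\alpha-1))$ and Lemma~\ref{lmm}(4) to bound, for $t$ in the middle block, $(Ty)(t)\geq\lambda\sum_s G(s+\alpha-1,s)q(s+\alpha-1)f(y(s+\alpha-1))\geq\lambda\|Ty\|$. Complete continuity is automatic here because $X$ is finite-dimensional: $T$ is continuous since $G$, $q$ are fixed and $f$ is continuous, and it maps bounded sets into bounded sets, hence into relatively compact sets. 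I would state this explicitly.

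Then comes the core of the argument, the norm comparisons on the two boundaries. Set $\Omega_1=\{y\in X:\|y\|<r_1\}$ and $\Omega_2=\{y\in X:\|y\|<r_2\}$, so $0\in\Omega_1\subset\overline{\Omega}_1\subset\Omega_2$. On $K\cap\partial\Omega_1$, for any such $y$ one has $0\le y(s+\alpha-1)\le\|y\|=r_1$, so $(H_1)$ gives $f(y(s+\alpha-1))\geq\overset{\ast}{\gamma}r_1$; then for $t$ in the middle block, using Lemma~\ref{lmm}(4),
\begin{equation*}
(Ty)(t)\geq\lambda\sum_{s=\frac{b+\alpha}{4}}^{\frac{3(b+\alpha)}{4}}G(s+\alpha-1,s)q(s+\alpha-1)f(y(s+\alpha-1))\geq\overset{\ast}{\gamma}r_1\sum_{s=\frac{b+\alpha}{4}}^{\frac{3(b+\alpha)}{4}}\lambda G(s+\alpha-1,s)q(s+\alpha-1)=r_1,
\end{equation*}
by the definition \eqref{eq:ast:gamma} of $\overset{\ast}{\gamma}$, whence $\|Ty\|\geq r_1=\|y\|$. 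On $K\cap\partial\Omega_2$, for any such $y$ one has $0\le y(s+\alpha-1)\le\|y\|=r_2$, so $(H_2)$ gives $f(y(s+\alpha-1))\leq\gamma r_2$; then for every $t$, using Lemma~\ref{lmm}(2),
\begin{equation*}
(Ty)(t)\leq\sum_{s=0}^{b+1}G(s+\alpha-1,s)q(s+\alpha-1)f(y(s+\alpha-1))\leq\gamma r_2\sum_{s=0}^{b+1}G(s+\alpha-1,s)q(s+\alpha-1)=r_2,
\end{equation*}
by the definition \eqref{eq:gamma} of $\gamma$, whence $\|Ty\|\leq r_2=\|y\|$. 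This is alternative $(i)$ of Lemma~\ref{kras}, so $T$ has a fixed point $y\in K\cap(\overline{\Omega}_2\setminus\Omega_1)$ with $r_1\leq\|y\|\leq r_2$. Since $y\in K$ it is nonnegative and, having norm at least $r_1>0$, it is nontrivial; by Lemma~\ref{lemmaRF} it solves \eqref{cd}. The main point requiring care is making sure the index ranges $\frac{b+\alpha}{4},\frac{3(b+\alpha)}{4}$ used in \eqref{eq:ast:gamma} and in the cone are exactly the ones over which Lemma~\ref{lmm}(4) furnishes the lower bound, and that $\overset{\ast}{\gamma}$ and $\gamma$ in \eqref{eq:ast:gamma}--\eqref{eq:gamma} are well defined (the sums are positive because $q$ is nontrivial and $G>0$); everything else is a routine application of the scheme.
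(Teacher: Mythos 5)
Your proposal is correct and follows essentially the same route as the paper: the same operator $T$, the same cone $K$ built from Lemma~\ref{lmm}(4), complete continuity from finite-dimensionality, and alternative $(i)$ of the Guo--Krasnoselskii theorem with $(H_1)$, $(H_2)$ and the constants \eqref{eq:gamma}--\eqref{eq:ast:gamma}. If anything, your version is slightly tidier in the lower-bound step, since you restrict the sum to the index range appearing in \eqref{eq:ast:gamma} so the comparison with $r_1$ is exact, whereas the paper keeps the full sum and gets the same conclusion by an inequality.
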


\begin{proof}
First of all, we define the operator $T:X\rightarrow X$ as follows:
\begin{equation}
\label{T}
Ty(t)=\sum\limits_{s=0}^{b+1}G(t,s)q(s+\alpha -1)f(y(s+\alpha -1)).
\end{equation}
We use Lemma~\ref{kras} with the following cone $K$:
\begin{equation*}
K:=\left\{ y\in X:\ \min_{\left[ \frac{b+\alpha }{4},
\frac{3(b+\alpha )}{4}\right]_{_{\mathbb{N}_{\alpha -1}}}}y(t)
\geq \lambda \left\Vert y\right\Vert \right\}.
\end{equation*}
To prove existence of a nontrivial solution to the fractional discrete problem
\eqref{cd} amounts to show existence of a fixed point to the operator $T$ in
$K\cap(\overline{\Omega }_{2}\backslash \Omega _{1})$.
From Lemma~\ref{lmm}, we get that $T(K)\subset K$. Taking into account that $T$ is a
summation operator on a discrete finite set, it follows that $T:K\rightarrow K$
is a completely continuous operator. Now, it remains to consider the
first part $(i)$ of Lemma~\ref{kras} to prove our result. Let
$\Omega _{i}=\left\{ y\in K:\left\Vert y\right\Vert \leq r_{i}\right\}$.
From $(H_1)$, we have for $t\in \left[ \frac{b+\alpha }{4},
\frac{3(b+\alpha )}{4}\right]_{_{\mathbb{N}_{\alpha -1}}}$
and $y\in K\cap \partial \Omega _{1}$ that
\begin{align*}
(Ty)(t)& \geq \sum\limits_{s=0}^{b+1}\min_{t\in \left[ \frac{b+\alpha }{4},
\frac{3(b+\alpha )}{4}\right]_{_{\mathbb{N}_{\alpha -1}}}}
G(t,s)q(s+\alpha -1)f(y(s+\alpha -1)) \\
& \geq \overset{\ast }{\gamma }\left( \sum\limits_{s=0}^{b+1}\lambda
G(s+\alpha -1,s)q(s+\alpha -1)\right) r_{1}\\
&=\left\Vert y\right\Vert.
\end{align*}
Thus, $\left\Vert Ty\right\Vert \geq \left\Vert y\right\Vert $ for $y\in
K\cap \partial \Omega _{1}$.
Let us now prove that $\left\Vert Ty\right\Vert \leq \left\Vert y\right\Vert
$ for all $y\in K\cap \partial \Omega _{2}$. From $(H_2)$, it follows that
\begin{align*}
\left\Vert Ty\right\Vert 
&= \max_{t\in \lbrack \alpha -1,\alpha +b]_{\mathbb{N}_{\alpha -1}}}
\sum\limits_{s=0}^{b+1}G(t,s)q(s+\alpha -1)f(y(s+\alpha -1))\\
& \leq \gamma \left( \sum\limits_{s=0}^{b+1}G(s+\alpha -1,s)q(s+\alpha-1)\right) r_{2}\\
&=\left\Vert y\right\Vert
\end{align*}
for $y\in K\cap \partial \Omega _{2}$. Thus, from Lemma~\ref{kras}, we
conclude that the operator $T$ defined by \eqref{T} has a fixed point in
$K\cap (\overline{\Omega }_{2}\backslash \Omega _{1})$. Therefore, the
discrete fractional boundary problem \eqref{cd} has at least one positive
solution belonging to $X$ such that $r_{1}\leq \left\Vert y\right\Vert \leq r_{2}$.
\end{proof}

\begin{example}
Consider the following discrete fractional boundary value problem:
\begin{equation}
\label{pex}
\left\{
\begin{array}{c}
\triangle^{\frac{3}{2}}y(t)+\frac{(\frac{2t+1}{2})}{y(\frac{2t+1}{2})+20}=0,
\\
y(-\frac{1}{2})=y(\frac{11}{2})=0.
\end{array}
\right.  
\end{equation}
Note that this problem is of type \eqref{cd} with $b=3$. 
The value \eqref{eq:gamma} of $\gamma$ is given by
\begin{equation*}
\gamma=\left( \sum \limits_{s=0}^{4}\frac{1}{\Gamma(\frac{3}{2})} 
\frac{\Gamma(6)\Gamma^{2}\left(2+\frac{3}{2}\right)}{\Gamma(5+\frac{3}{2})
\Gamma^{2}(3)}\left(\frac{2s+1}{2}\right)\right) ^{-1} \approx 0.0616
\end{equation*}
while, from formula (3.3) of \cite{fff}, the value \eqref{eq:ast:gamma} 
of $\overset{\ast}{\gamma}$ becomes
\begin{equation*}
\overset{\ast}{\gamma}=\left( \sum \limits_{s=0}^{4}0.03779 
\frac{1}{\Gamma(\frac{3}{2})}\frac{\Gamma(6)\Gamma^{2}(2
+\frac{3}{2})}{\Gamma(5 +\frac{3}{2})\Gamma^{2}(3)}
\left(\frac{2s+1}{2}\right)\right) ^{-1} \approx 1.6301.
\end{equation*}
Choose $r_{1}=1/100$ and $r_{2}=1$. Then, one gets

\begin{enumerate}
\item $f(y)\geq\overset{\ast}{\gamma}r_{1}$ for $y\in\lbrack0,1/100]$;

\item $f(y)\leq\gamma r_{2}$ for $y\in\lbrack0,1]$.
\end{enumerate}
Therefore, from Theorem~\ref{th}, problem \eqref{pex} has at least one
nontrivial solution $y$ in $X$ such that $y\in\lbrack1/100,1]$.
\end{example}


\subsection{Generalized discrete fractional Lyapunov inequalities}
\label{sec:3.2}

The next result generalizes \cite[Theorem 3.1]{fdf}: in the particular case
of $f(y)=y$, inequalities \eqref{eq:new:ineq} reduce to those in 
\cite[Theorem~3.1]{fdf}. Note that $f\in C(\mathbb{R}_{+},\mathbb{R}_{+})$ 
is a nondecreasing function.

\begin{theorem}
\label{th0} 
Let $q:$ $\mathbb{[\alpha -}1,\alpha +b]_{\mathbb{N}_{\alpha-1}}
\rightarrow \mathbb{R}$ be a nontrivial function. Assume that 
$f\in C\left( \mathbb{R}_{+},\mathbb{R}_{+}\right)$ 
is a nondecreasing function. If the discrete fractional boundary 
value problem \eqref{cd} has a nontrivial solution $y$, then
\begin{equation}
\label{eq:new:ineq}
\left\{
\begin{array}{l}
\sum\limits_{s=0}^{b+1}\left\vert q(s+\alpha -1)\right\vert >\frac{4\Gamma
(\alpha )\Gamma (b+\alpha +2)\Gamma ^{2}(\frac{b}{2}+2)\eta }{(b+2\alpha
)(b+2)\Gamma ^{2}(\frac{b}{2}+\alpha )\Gamma (b+3)f\left( \eta \right) },
\text{ if }b\text{ is even}, \\[0.3cm]
\sum\limits_{s=0}^{b+1}\left\vert q(s+\alpha -1)\right\vert >\frac{\Gamma
(\alpha )\Gamma (b+\alpha +2)\Gamma ^{2}(\frac{b+3}{2})\eta }{\Gamma
(b+3)\Gamma ^{2}(\frac{b+1}{2}+\alpha )f\left( \eta \right) },\text{ if }b
\text{ is odd},
\end{array}
\right.   
\end{equation}
where $\eta =\max_{[\alpha -1,\alpha +b]_{\mathbb{N}_{\alpha -1}}}
y(s+\alpha-1)$.
\end{theorem}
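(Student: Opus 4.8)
The plan is to start from the integral representation of Lemma~\ref{lemmaRF}, which tells us that any nontrivial solution $y$ of \eqref{cd} satisfies
\begin{equation*}
y(t)=\sum_{s=0}^{b+1}G(t,s)\,q(s+\alpha-1)\,f(y(s+\alpha-1)),
\qquad t\in[\alpha-1,\alpha+b]_{\mathbb{N}_{\alpha-1}}.
\end{equation*}
First I would take absolute values and use $|G(t,s)|=G(t,s)\geq 0$ (property~1 of Lemma~\ref{lmm}) together with property~2, which says that for each fixed $s$ the maximum of $G(t,s)$ over $t$ is attained at $t=s+\alpha-1$. This yields, for every admissible $t$,
\begin{equation*}
|y(t)|\leq \sum_{s=0}^{b+1}G(s+\alpha-1,s)\,|q(s+\alpha-1)|\,f(y(s+\alpha-1)).
\end{equation*}
Since $f$ is nondecreasing and $0\le y(s+\alpha-1)\le \eta$ for all $s$, we bound $f(y(s+\alpha-1))\le f(\eta)$; and then taking the maximum over $t$ on the left and using property~3 to replace each $G(s+\alpha-1,s)$ by its (uniform in $s$) maximum $M_b:=\max_{s}G(s+\alpha-1,s)$ gives
\begin{equation*}
\eta=\max_{t}|y(t)|\leq M_b\,f(\eta)\sum_{s=0}^{b+1}|q(s+\alpha-1)|.
\end{equation*}

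Next I would observe that because $y$ is a \emph{nontrivial} solution and $f,q$ are nonnegative (more precisely, the right-hand side cannot vanish identically), we actually have $\eta>0$ and $f(\eta)>0$, so we may divide through to obtain the strict-in-one-place estimate
\begin{equation*}
\sum_{s=0}^{b+1}|q(s+\alpha-1)|\;\geq\;\frac{\eta}{M_b\,f(\eta)}.
\end{equation*}
Upgrading this to a \emph{strict} inequality is the one genuinely delicate point: it requires noting that the bounds $f(y(s+\alpha-1))\le f(\eta)$ and $G(s+\alpha-1,s)\le M_b$ cannot all be simultaneously tight at the index where the maximum $\eta$ of $y$ is achieved together with where the maximum of $G$ is achieved, unless $q$ is supported at a single point — and even then the two maximizing indices for $G(\cdot,\cdot)$ and for $y$ differ, forcing a strict drop somewhere. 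This is exactly the same argument as in \cite[Theorem~3.1]{fdf} and in the continuous case \cite{MyID:345}, so I would simply invoke that reasoning: a nontrivial solution cannot make all the successive inequalities equalities, hence the inequality is strict.

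Finally, it only remains to substitute the explicit value of $M_b$ supplied by property~3 of Lemma~\ref{lmm}:
\begin{equation*}
M_b=\frac{1}{4}\,\frac{(b+2\alpha)(b+2)\Gamma^2(\tfrac{b}{2}+\alpha)\Gamma(b+3)}{\Gamma(\alpha)\Gamma(b+\alpha+2)\Gamma^2(\tfrac{b}{2}+2)}
\quad\text{($b$ even)},\qquad
M_b=\frac{1}{\Gamma(\alpha)}\,\frac{\Gamma(b+3)\Gamma^2(\tfrac{b+1}{2}+\alpha)}{\Gamma(b+\alpha+2)\Gamma^2(\tfrac{b+3}{2})}
\quad\text{($b$ odd)},
\end{equation*}
and compute $\eta/(M_b f(\eta))$ in each parity case; reciprocating the even-$b$ value produces the factor $\tfrac{4\Gamma(\alpha)\Gamma(b+\alpha+2)\Gamma^2(\frac b2+2)}{(b+2\alpha)(b+2)\Gamma^2(\frac b2+\alpha)\Gamma(b+3)}$ and the odd-$b$ value the factor $\tfrac{\Gamma(\alpha)\Gamma(b+\alpha+2)\Gamma^2(\frac{b+3}{2})}{\Gamma(b+3)\Gamma^2(\frac{b+1}{2}+\alpha)}$, matching \eqref{eq:new:ineq} exactly. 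The main obstacle, as noted, is the justification of strictness; everything else is a direct chain of monotonicity estimates plus the bookkeeping of Gamma functions. No concavity of $f$ is needed anywhere, which is the promised improvement over Theorem~\ref{th0 copy(1)}.
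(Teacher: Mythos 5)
Your proposal follows essentially the same route as the paper's own proof: invoke Lemma~\ref{lemmaRF}, bound $G(s+\alpha-1,s)$ by its explicit maximum from Lemma~\ref{lmm}, use the monotonicity of $f$ to replace $f(y(s+\alpha-1))$ by $f(\eta)$, and rearrange to get \eqref{eq:new:ineq}; the order in which you apply the two bounds is the only (cosmetic) difference. You are in fact more explicit than the paper about the delicate strict-inequality step, which the paper simply asserts when passing to $f(\eta)$, so the proposal is correct and matches the paper's approach.
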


\begin{proof}
Since the discrete fractional problem \eqref{cd} has a nontrivial solution,
we get via Lemma~\ref{lemmaRF} that
\begin{align*}
\left\Vert y\right\Vert & \leq \sum\limits_{s=0}^{b+1}G(s+\alpha
-1,s)\left\vert q(s+\alpha -1)\right\vert f(y(s+\alpha -1))\\
& \leq \left\{
\begin{array}{l}
\frac{(b+2\alpha )(b+2)\Gamma ^{2}(\frac{b}{2}+\alpha )\Gamma
(b+3)}{4 \Gamma (\alpha )\Gamma (b+\alpha +2)\Gamma^{2}(\frac{b}{2}+2)}
\sum\limits_{s=0}^{b+1}\left\vert q(s+\alpha -1)\right\vert f(y(s+\alpha
-1))\text{ if }b\text{ is even}, \\
\frac{1}{\Gamma (\alpha )}\frac{\Gamma (b+3)\Gamma^{2}(\frac{b+1}{2}+\alpha
)}{\Gamma (b+\alpha +2)\Gamma ^{2}(\frac{b+3}{2})}\sum\limits_{s=0}^{b+1}\left\vert
q(s+\alpha -1)\right\vert f(y(s+\alpha -1))\text{ if }b\text{ is odd}.
\end{array}
\right.
\end{align*}
Taking into account that $f$ is a nondecreasing function and
$$
\eta =\max_{[\alpha -1,\alpha +b]_{\mathbb{N}_{\alpha -1}}}y(s+\alpha -1),
$$
we get that
\begin{equation*}
\left\Vert y\right\Vert <\left\{
\begin{array}{l}
\frac{1}{4}\frac{(b+2\alpha )(b+2)\Gamma ^{2}(\frac{b}{2}+\alpha )\Gamma
(b+3)}{\Gamma (\alpha )\Gamma (b+\alpha +2)\Gamma ^{2}(\frac{b}{2}+2)}
\sum\limits_{s=0}^{b+1}\left\vert q(s+\alpha -1)\right\vert f\left( \eta
\right) \text{ if }b\text{ is even}, \\
\frac{1}{\Gamma (\alpha )}\frac{\Gamma (b+3)\Gamma ^{2}(\frac{b+1}{2}+\alpha
)}{\Gamma (b+\alpha +2)\Gamma ^{2}(\frac{b+3}{2})}
\sum\limits_{s=0}^{b+1}\left\vert q(s+\alpha -1)\right\vert
f\left( \eta \right) \text{ if }b\text{ is odd}.
\end{array}
\right.
\end{equation*}
Hence,
\begin{equation*}
\left\{
\begin{array}{l}
\sum\limits_{s=0}^{b+1}\left\vert q(s+\alpha -1)\right\vert >\frac{4\Gamma
(\alpha )\Gamma (b+\alpha +2)\Gamma ^{2}(\frac{b}{2}+2)\eta }{(b+2\alpha
)(b+2)\Gamma ^{2}(\frac{b}{2}+\alpha )\Gamma (b+3)f\left( \eta \right)}
\text{ if }b\text{ is even}, \\
\sum\limits_{s=0}^{b+1}\left\vert q(s+\alpha -1)\right\vert >\frac{\Gamma
(\alpha )\Gamma (b+\alpha +2)\Gamma ^{2}(\frac{b+3}{2})\eta }{\Gamma
(b+3)\Gamma ^{2}(\frac{b+1}{2}+\alpha )f\left( \eta \right) }
\text{ if }b\text{ is odd}.
\end{array}
\right.
\end{equation*}
This concludes the proof.
\end{proof}

\begin{remark}
Lyapunov inequalities are usually used to get bounds for
the eigenvalues of Sturm--Liouville problems \cite{MR3443424,MyID:376}. 
Therefore, if we consider the discrete Sturm--Liouville problem \eqref{cd} 
with $f(y)=y$ and $q(t)=\lambda$, then inequalities \eqref{eq:new:ineq} 
give us an interval for the eigenvalues $\lambda$ \cite{fdf}. Here we do 
a generalization of the results obtained in \cite{MyID:345,fdf}.
\end{remark}

Most results about Lyapunov inequalities, including classical forms and
fractional continuous and discrete versions, assume, similarly to 
Theorem~\ref{th0}, the existence of a nontrivial solution to the considered problem.
In the following theorem, we give other assumptions, instead of assuming
existence of a nontrivial solution, to have new Lyapunov inequalities when
the nonlinear term satisfies certain conditions.

\begin{theorem}
\label{co} 
Consider the discrete fractional boundary value problem
\begin{equation*}
\left\{
\begin{array}{c}
\triangle ^{\alpha }y+q(t+\alpha -1)f(y(t+\alpha -1))=0,
\quad 1<\alpha \leq 2,\\
y(\alpha -2)=y(\alpha +b+1)=0,\quad \mathbb{N} \ni b \geq 2,
\end{array}
\right.
\end{equation*}
where $f\in C\left( \mathbb{R}_{+},\mathbb{R}_{+}\right)$ is nondecreasing
and $q:\mathbb{[\alpha -}1, \alpha +b]_{\mathbb{N}_{\alpha -1}}\rightarrow
\mathbb{R}_{+}$ is a nontrivial function. If there exist two positive
constants $r_{2}>r_{1}>0$ such that $f(y)\geq \overset{\ast }{\gamma }r_{1}$
for $y\in \lbrack 0,r_{1}]$ and $f(y)\leq \gamma r_{2}$ for $y\in \lbrack
0,r_{2}]$, then
\begin{equation}  
\label{eq:LI:d2}
\left\{
\begin{array}{l}
\sum\limits_{s=0}^{b+1}\left\vert q(s+\alpha -1)\right\vert 
> \frac{r_{1}}{\gamma r_{2}}\frac{4\Gamma (\alpha)
\Gamma (b+\alpha +2)\Gamma ^{2}(\frac{b}{2} +2)}{(b+2\alpha)(b+2)
\Gamma^{2}(\frac{b}{2}+\alpha )\Gamma (b+3)}\text{ if }
b\text{ is even},\\[0.3cm]
\sum\limits_{s=0}^{b+1}\left\vert q(s+\alpha -1)\right\vert 
> \frac{r_{1}}{\gamma r_{2}}\frac{\Gamma (\alpha )\Gamma (b+\alpha +2) 
\Gamma ^{2}(\frac{b+3}{2})}{\Gamma (b+3)\Gamma ^{2}(\frac{b+1}{2}+\alpha)} 
\text{ if }b\text{ is odd}.
\end{array}
\right.
\end{equation}
\end{theorem}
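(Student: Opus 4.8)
The plan is to obtain \eqref{eq:LI:d2} by chaining the two results already proved: the existence Theorem~\ref{th} and the Lyapunov inequality of Theorem~\ref{th0}. The key observation is that the standing hypotheses here — existence of constants $r_2>r_1>0$ with $f(y)\ge\overset{\ast}{\gamma}r_1$ on $[0,r_1]$ and $f(y)\le\gamma r_2$ on $[0,r_2]$ — are literally $(H_1)$ and $(H_2)$. So the first step is simply to apply Theorem~\ref{th}, which produces a nontrivial positive solution $y\in X$ of \eqref{cd} satisfying $r_1\le\|y\|\le r_2$.

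The second step uses that this $y$ is, in particular, a nontrivial solution of \eqref{cd}, so Theorem~\ref{th0} applies to it and yields inequalities \eqref{eq:new:ineq} with $\eta=\max_{[\alpha-1,\alpha+b]_{\mathbb{N}_{\alpha-1}}}y(s+\alpha-1)$. Since $y\ge0$ and $y$ vanishes at the endpoints $\alpha-2$ and $\alpha+b+1$, the maximum of $y$ is attained on the interior, hence $\eta=\|y\|$; combined with the bound from the first step this gives $r_1\le\eta\le r_2$.

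The third step is to trade the factor $\eta/f(\eta)$ occurring in \eqref{eq:new:ineq} for the constant $r_1/(\gamma r_2)$. Because $f$ is nondecreasing and $\eta\le r_2$, hypothesis $(H_2)$ gives $f(\eta)\le f(r_2)\le\gamma r_2$, while $(H_1)$ evaluated at $r_1$ gives $f(\eta)\ge f(r_1)\ge\overset{\ast}{\gamma}r_1>0$, so $f(\eta)$ is strictly positive and $\eta/f(\eta)\ge r_1/(\gamma r_2)$. Since the right-hand side of each branch of \eqref{eq:new:ineq} is a positive constant times $\eta/f(\eta)$, substituting this lower bound — in both the even-$b$ and odd-$b$ cases — gives precisely \eqref{eq:LI:d2}.

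There is no genuinely difficult step: the argument is a short composition of Theorems~\ref{th} and \ref{th0} followed by a one-line monotonicity estimate. The only point requiring a little care is the identification $\eta=\|y\|$ and the verification that $f(\eta)>0$, so that the division by $f(\eta)$ inherited from Theorem~\ref{th0} is legitimate; this is exactly why one needs the \emph{positivity} of the solution delivered by Theorem~\ref{th}, not merely its nontriviality.
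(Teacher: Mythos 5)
Your proposal is correct and follows exactly the route the paper intends: its proof of Theorem~\ref{co} is the one-line remark that it ``follows from Theorems~\ref{th} and \ref{th0}'', and your argument is precisely the natural expansion of that — invoke Theorem~\ref{th} to get a positive solution with $r_1\le\|y\|=\eta\le r_2$, apply Theorem~\ref{th0} to it, and replace $\eta/f(\eta)$ by the lower bound $r_1/(\gamma r_2)$ using $(H_1)$, $(H_2)$ and the monotonicity of $f$. Your added care about $\eta=\|y\|$ and $f(\eta)>0$ is a useful detail the paper leaves implicit.
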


\begin{proof}
Follows from Theorems~\ref{th} and \ref{th0}.
\end{proof}

\begin{example}
Consider the following fractional boundary value problem:
\begin{equation*}
\left\{
\begin{array}{c}
\triangle ^{\frac{3}{2}}y(t)+\frac{2t+1}{2\Gamma (6)}\ln(2+y)=0,\\
y(-\frac{1}{2})=y(\frac{11}{2})=0.
\end{array}
\right.
\end{equation*}
We have that
\begin{description}
\item[$(i)$] $f(y)=\frac{\ln (2+y)}{\Gamma (6)}:\mathbb{R}_{+}\rightarrow
\mathbb{R}_{+}$ is continuous and nondecreasing;

\item[$(ii)$] $q(t)=\mathbb{[}\frac{1}{2},\frac{9}{2}]_{\mathbb{N} 
\frac{1}{2}}\rightarrow \mathbb{R}_{+}$ with $\sum\limits_{s=0}^{4} 
\frac{2s+1}{2}=\frac{25}{2}>0$.
\end{description}
We computed before the values of $\gamma $ and $\overset{\ast }{\gamma}$.
Choosing $r_{1}=1/10000$ and $r_{2}=1$, we get
\begin{enumerate}
\item $f(y)=\frac{\ln (2+y)}{\Gamma (6)}\geq \overset{\ast }{\gamma }r_{1}$
for $y\in \lbrack 0,1/10000]$;

\item $f(y)=\frac{\ln (2+y)}{\Gamma (6)}\leq \gamma r_{2}$ 
for $y\in \lbrack0,1]$.
\end{enumerate}
Therefore, from Theorem~\ref{co}, we get that
\begin{equation*}
\sum\limits_{s=0}^{4}\left\vert \frac{2s+1}{2}\right\vert 
> \frac{\Gamma(\frac{3}{2})\Gamma (\frac{13}{2})\Gamma^{2}(3) 
\Gamma (6)}{10000\Gamma(6)\Gamma ^{2}(\frac{7}{2})0.0616} 
\approx 0.15.
\end{equation*}
\end{example}


\section*{Acknowledgments}

This research was carried out while Chidouh was visiting the Department of
Mathematics of University of Aveiro, Portugal, 2016. The hospitality of the
host institution and the financial support of Houari Boumedienne University,
Algeria, are here gratefully acknowledged. Torres was supported through
CIDMA and the Portuguese Foundation for Science and Technology (FCT), within
project UID/MAT/04106/2013. The authors would like to thank an anonymous 
Referee for several comments and questions, which were useful to improve the paper. 



\end{document}